\newcommand{\Aut}[1]{\mathop{\mathrm{Aut}}(#1)}
\newcommand{\inv}{^{-1}}
\newcommand{\wh}{\widehat}
\newcommand{\End}[1]{\mathop{\mathrm{End}}(#1)}
\newcommand{\Thmname}{Theorem}
\newcommand{\Propname}{Proposition}
\newcommand{\Lemmaname}{Lemma}
\newcommand{\Definitionname}{Definition}
\newcommand{\ilim}{\varprojlim}
\newtheorem{Thm}{\Thmname}
\newtheorem{Prop}[Thm]{\Propname}
\theoremstyle{definition}
\theoremstyle{remark}
\newtheorem{Rmk}[Thm]{Remark}}
\newtheorem{Cor}[Thm]{Corollary}
{\theoremstyle{remark}
}
\newtheorem*{Lemma*}{Lemma}
\title{On the endomorphism monoid of a profinite semigroup}
\author{Benjamin Steinberg}
\address{School of Mathematics and Statistics\\
Carleton University \\
1125 Colonel By Drive\\
Ottawa, Ontario  K1S 5B6 \\
Canada}
\thanks{The author was supported in part by NSERC}
\email{bsteinbg@math.carleton.ca}
\date{\today}
\begin{document}

\begin{abstract}
Necessary and sufficient conditions are given for the endomorphism monoid of a profinite semigroup to be profinite.  A similar result is established for the automorphism group.
\end{abstract}

\maketitle

\section{Introduction}

A classical result in profinite group theory says that if $G$ is a profinite group with a fundamental system of neighborhoods of the identity consisting of open characteristic subgroups, then the group $\Aut G$ of continuous automorphisms of $G$ is profinite with respect to the compact-open topology~\cite{RZbook}.  This applies in particular to finitely generated profinite groups.  Hunter proved that the monoid of continuous endomorphisms $\End S$ of a finitely generated profinite semigroup $S$ is profinite in the compact-open topology~\cite{Hunterendo}.  This result was later rediscovered by Almeida~\cite{Almeida:2005bshort}, who was unaware of Hunter's result. Almeida was the first to use to good effect that $\End S$ is profinite.     In this note I give necessary and sufficient conditions for $\Aut S$ and $\End S$ to be profinite for a profinite semigroup $S$.  This came out of trying to find an easier proof than Almeida's for the finitely generated case.  This led me unawares to exactly Hunter's proof, which I afterwards discovered via a Google search.  Like Hunter~\cite{Hunterendo} and Ribes and Zalesskii~\cite{RZbook}, I give an explicit description of $\End S$ and $\Aut S$ as inverse limits in the case they are profinite.  I also deduce the Hopfian property for $S$ in this case.  Recall that a topological semigroup $S$ is \emph{Hopfian} if each surjective continuous endomorphism of $S$ is an automorphism.

\section{The main result}
My approach, like that of Hunter~\cite{Hunterendo}, but unlike that of Almeida~\cite{Almeida:2005bshort} and Ribes and Zalesskii~\cite{RZbook}, relies on the uniform structure on a profinite semigroup and Ascoli's theorem.   Recall that a congruence $\rho$ on a profinite semigroup $S$ is called \emph{open} if it is an open subset of $S\times S$.  It is easy to see that open congruences are precisely the kernels of continuous surjections from $S$ to finite semigroups~\cite[Chapter 3]{qtheor}.  A congruence $\rho$ on $S$ is called \emph{fully invariant} if, for all continuous endomorphisms $f\colon S\to S$, one has $(x,y)\in \rho$ implies $(f(x),f(y))\in \rho$.  Equivalently, $\rho$ is fully invariant if and only if $\rho \subseteq (f\times f)\inv({\rho})$ for all $f\in \End S$.
In group theory, it is common to call a subgroup invariant under all automorphisms `characteristic.'  As I do not know of any terminology in vogue for the corresponding notion for congruences, it seems reasonable to define a congruence $\rho$ on $S$ to be \emph{characteristic} if $(x,y)\in \rho$ implies $(f(x),f(y))\in \rho$ for all continuous automorphisms $f$ of $S$.  Again, this amounts to $\rho\subseteq (f\times f)\inv(\rho)$ for all $f\in \Aut S$.  Clearly, any fully invariant congruence is characteristic.

Let $(X,\mathscr U)$ and $(Y,\mathscr V)$ be uniform spaces.  Recall that a family $\mathscr F$ of functions from $X$ to $Y$ is said to be \emph{uniformly equicontinuous} if, for all entourages $R\in \mathscr V$, one has $\bigcap_{f\in \mathscr F}(f\times f)\inv (R)\in \mathscr U$.  Of course, a uniformly equicontinuous family consists of uniformly continuous functions. It is clearly enough to have this condition satisfied for all $R$ running over a fundamental system of entourages for the uniformity $\mathscr V$.

Every compact Hausdorff space $X$ has a unique uniformity compatible with its topology, namely the collection of all neighborhoods (not necessarily open) of the diagonal in $X\times X$.  The following theorem is a special case of the Ascoli theorem for uniform spaces.

\begin{Thm}[Ascoli]\label{Ascoli}
Let $X,Y$ be compact Hausdorff spaces equipped with their unique uniform structures and let $\mathscr C(X,Y)$ be the space of continuous map from $X$ to $Y$ equipped with the compact-open topology.  Then, for a family $\mathscr F\subseteq \mathscr C(X,Y)$, the following are equivalent:
\begin{enumerate}
\item $\mathscr F$ is compact in the compact-open topology;
\item $\mathscr F$ is closed and (uniformly) equicontinuous.
\end{enumerate}
\end{Thm}

In the case of a profinite semigroup $S$ the uniform structure is given by taking the open congruences as a fundamental system of entourages.  The multiplication on $S$ is uniformly continuous and all continuous endomorphisms of $S$ are uniformly continuous.  The following result gives a sufficient condition for a profinite semigroup to have a fundamental system of entourages consisting of open fully invariant congruences.  The \emph{index} of an open congruence $\rho$ on a profinite semigroup $S$ is the cardinality of $S/\rho$.

\begin{Prop}\label{fullyinvariant}
Let $S$ be a profinite semigroup admitting only finitely many open congruences of index $n$ for each $n\geq 1$.  Then $S$ has a fundamental system of open fully invariant congruences.  This applies in particular if $S$ is finitely generated.
\end{Prop}
\begin{proof}
Let $\mathscr F_n$ be the set of open congruences on $S$ of index at most $n$ and let ${\rho_n}=\bigcap \mathscr F_n$; it is open because $\mathscr F_n$ is finite.  Clearly, the family $\{\rho_n\mid n\geq 1\}$ is a fundamental system of entourages for the uniformity.  I claim $\rho_n$ is fully invariant.  Indeed, let $f\in \End S$ and ${\sigma}\in \mathscr F_n$.  Then if $p\colon S\to S/{\sigma}$ is the quotient map, one has $(f\times f)\inv({\sigma})=\ker pf$ and hence is of index at most $n$.  Thus \[(f\times f)\inv ({\rho_n})=(f\times f)\inv \left(\bigcap \mathscr F_n\right)=\bigcap_{{\sigma}\in \mathscr F_n}(f\times f)\inv({\sigma})\supseteq \bigcap \mathscr F_n={\rho_n}\] as required.

The final statement follows since if $X$ is a finite generating set for $S$, then any congruence of index $n$ on $S$ is determined by its restriction to the finite set $X\times X$.
\end{proof}

\begin{Rmk}
There are non-finitely generated profinite groups that satisfy the hypothesis of Proposition~\ref{fullyinvariant}.  For example, one can take the direct product of all finite simple groups (one copy per isomorphism class); see~\cite[Exercise 4.4.5]{RZbook}.
\end{Rmk}

It is well known that, for any locally compact Hausdorff space $X$, the compact-open topology turns $\mathscr C(X,X)$ into a topological monoid with respect to the operation of composition.  The main result of this note is:

\begin{Thm}\label{main1}
Let $S$ be a profinite semigroup.  Then $\End S$ (respectively, $\Aut S$) is compact in the compact-open topology if and only if $S$ admits a fundamental system of open fully invariant (respectively, characteristic) congruences.  Moreover, if $\End S$ (respectively, $\Aut S$) is compact, then it is profinite and the compact-open topology coincides with the topology of pointwise convergence.
\end{Thm}
\begin{proof}
I just handle the case of $\End S$ as the corresponding result for $\Aut S$ is obtained by simply replacing the words `fully invariant' by `characteristic' and `endomorphism' by `automorphism'.

First observe that $\End S$ is closed in $\mathscr C(S,S)$.   Indeed, suppose that $f\colon S\to S$ is a continuous map that is not a homomorphism.  Then there are elements $s,t\in S$ such that $f(st)\neq f(s)f(t)$.  Choose disjoint open neighborhoods $U,V$ of $f(st)$ and $f(s)f(t)$ respectively.  By continuity of multiplication one can find open neighborhoods $W,W'$ of $f(s)$ and $f(t)$ so that $W\cdot W'\subseteq V$.  Then let $N$ be the set of all continuous functions $g\colon S\to S$ such that $g(st)\subseteq U$, $g(s)\subseteq W$ and $g(t)\subseteq W'$.  Then $f\in N$ and $N$ is open in the compact-open topology.  Clearly, if $g\in N$, then $g(s)g(t)\in W\cdot W'\subseteq V$ and $g(st)\in U$, whence $g(st)\neq g(s)g(t)$.  Thus $\End S$ is closed.

Assume that $\End S$ is compact.  By Ascoli's theorem, it is uniformly equicontinuous. Let $\rho$ be an open congruence on $S$.  Then uniform equicontinuity implies that \[\sigma = \bigcap_{f\in \End S} (f\times f)\inv (\rho)\] is an entourage of the uniformity on $S$. Evidentally, $\sigma$ is a congruence.  It must contain an open congruence by definition of the uniformity on $S$ and so $\sigma$ is an open congruence (the open congruences being a filter in the lattice of congruences on $S$).  Since the identity belongs to $\End S$, trivially $\sigma \subseteq \rho$.  It remains to observe that $\sigma$ is fully invariant.  Indeed, if $g\in \End S$, then \[(g\times g)\inv(\sigma) = \bigcap_{f\in \End S}(fg\times fg)\inv(\rho)\supseteq \bigcap_{h\in \End S} (h\times h)\inv (\rho)=\sigma\] establishing that $\sigma$ is fully invariant.  Thus $S$ has a fundamental system of open fully invariant congruences.

Conversely, suppose that $S$ has a fundamental system of open fully invariant congruences. Uniform equicontinuity follows because if $\rho$ is an open fully invariant congruence, then for any $f\in\End S$, one has $(f\times f)\inv (\rho)\supseteq \rho$ and hence $\bigcap_{f\in \End S}(f\times f)\inv(\rho)\supseteq \rho$.  Since the set of entourages is a filter, it follows that $\bigcap_{f\in \End S}(f\times f)\inv(\rho)$ is an entourage.  Because the open fully invariant congruences form a fundamental system of entourages for the uniformity on $S$, this shows that $\End S$ is uniformly equicontinuous.

Compactness of $\End S$ is now direct from Ascoli's theorem.  Let us equip $S^S$ with the topology of pointwise convergence. Since the compact-open topology is finer than the topology of point\-wise convergence, the natural inclusion $i\colon \End S\to S^S$ is continuous.  As $\End S$ and $S^S$ are compact Hausdorff, it follows that $i$ is a topological embedding and hence the compact-open topology on $\End S$ coincides with the topology of pointwise convergence.  Also $\End S$ is totally disconnected being a subspace of $S^S$.  Thus $\End S$ is profinite.
\end{proof}

In  light of Proposition~\ref{fullyinvariant},  Hunter's result for finitely generated profinite semigroups (and the corresponding well-known result for automorphism groups of finitely generated profinite groups) is immediate.

\begin{Cor}
If $S$ is a finitely generated profinite semigroup, then $\End S$ is a profinite monoid and $\Aut S$ is a profinite group in the compact-open topology, which coincides with the topology of pointwise convergence.
\end{Cor}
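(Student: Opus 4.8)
The plan is to obtain the corollary as a direct consequence of Proposition~\ref{fullyinvariant} and Theorem~\ref{main1}, with essentially no new computation. First I would invoke the final clause of Proposition~\ref{fullyinvariant}: since $S$ is finitely generated, it admits a fundamental system of open fully invariant congruences. As observed just before Theorem~\ref{main1}, every fully invariant congruence is characteristic, so this very same family is simultaneously a fundamental system of open characteristic congruences. Thus both hypotheses needed to run Theorem~\ref{main1}---the one for $\End S$ and the one for $\Aut S$---are in place at once.

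Next I would feed each fundamental system into the appropriate half of Theorem~\ref{main1}. The fully invariant system yields that $\End S$ is compact, hence profinite, with the compact-open topology agreeing with the topology of pointwise convergence. Since $\mathscr C(S,S)$ is a topological monoid under composition and $\End S$ is a closed submonoid of it (closedness being established inside the proof of Theorem~\ref{main1}), it follows that $\End S$ is a profinite monoid. The characteristic system yields in exactly the same way that $\Aut S$ is compact, Hausdorff and totally disconnected, again with the two topologies coinciding.

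The one place I expect to need a word of justification---and the only real obstacle---is upgrading ``$\Aut S$ is a compact, totally disconnected, Hausdorff monoid that happens to be a group'' to ``$\Aut S$ is a profinite \emph{group}.'' Compactness and total disconnectedness are handed to us by Theorem~\ref{main1}, and $\Aut S$ is algebraically a group, so all that remains is continuity of inversion. Here I would appeal to the classical fact that a compact Hausdorff topological monoid whose underlying set is a group is automatically a topological group; combined with compactness, Hausdorffness and total disconnectedness, this makes $\Aut S$ a profinite group. Everything else is a transcription of the two cited results.
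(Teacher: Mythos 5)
Your proposal is correct and is essentially the paper's own argument: the paper treats this corollary as immediate from Proposition~\ref{fullyinvariant} (finitely generated implies a fundamental system of open fully invariant, hence also characteristic, congruences) combined with Theorem~\ref{main1}. Your extra step about continuity of inversion in $\Aut S$ is a reasonable precaution, but it is already subsumed by the statement of Theorem~\ref{main1}, which asserts profiniteness outright (and indeed the classical fact you cite---a compact Hausdorff topological semigroup that is algebraically a group is a topological group---is the standard way to dispose of it).
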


Theorem~\ref{main1} also implies the converse of~\cite[Proposition 4.4.3]{RZbook}: a profinite group $G$ has profinite automorphism group if and only if it has a fundamental system of neighborhoods of the identity consisting of open characteristic subgroups.

\begin{Rmk}
If $S$ is a profinite semigroup generated by a finite set $X$, then we have the composition of continuous maps $\End S\to S^S\to S^X$ where the last map is induced by restriction.  Moreover, this composition is injective.  Since $\End S$ is compact, it follows that $\End S$ is homeomorphic to the closed space of all maps $X\to S$ that extend to an endomorphism of $S$ equipped with the topology of pointwise convergence.  In the case $S$ is a relatively free profinite semigroup on $X$, we in fact have $\End S$ is homeomorphic to $S^X$.  Under this assumption, if $T$ is the abstract subsemigroup generated by $X$ (which is relatively free in some variety of semigroups), then it easily follows that $T^X$ is dense in $S^X$ and so $\End T$ is dense in $\End S$.
\end{Rmk}

A corollary is the well-known fact that finitely generated profinite semigroups are Hopfian.  In fact, there is the following stronger result.

\begin{Cor}\label{Hopfian}
Let $S$ be a profinite semigroup admitting a fundamental system of open fully invariant congruences, e.g., if $S$ is finitely generated.  Then $S$ is Hopfian.
\end{Cor}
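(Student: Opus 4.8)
The plan is to lean on Theorem~\ref{main1}, which under the present hypothesis makes $\End S$ a compact monoid in which the compact-open topology coincides with pointwise convergence, and in which composition is jointly continuous (as noted for $\mathscr C(S,S)$ since $S$ is compact, hence locally compact). Let $f\colon S\to S$ be a continuous surjective endomorphism. It suffices to prove that $f$ is injective: a continuous bijection between compact Hausdorff spaces is a homeomorphism, so an injective surjective continuous endomorphism is automatically an automorphism.

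First I would pass to the closed subsemigroup $T=\overline{\{f^n\mid n\geq 1\}}$ of $\End S$. Since $\End S$ is compact and composition is jointly continuous, $T$ is a compact commutative topological semigroup, and hence contains an idempotent $e=e^2$ by the Ellis--Numakura lemma. The crucial step is to show that $e$ is surjective. To see this, fix a net $f^{n_\alpha}\to e$ (in the topology of pointwise convergence) and fix $s\in S$. As each $f^{n_\alpha}$ is surjective, choose $t_\alpha\in S$ with $f^{n_\alpha}(t_\alpha)=s$; by compactness of $S$ we may pass to a subnet along which $t_\alpha\to t$. Joint continuity of the evaluation map $\End S\times S\to S$, $(g,x)\mapsto g(x)$ — valid because $S$ is locally compact and the compact-open topology is in force — then gives $s=f^{n_\alpha}(t_\alpha)\to e(t)$, so $e(t)=s$ and $e$ is surjective. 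A surjective idempotent endomorphism must be the identity: writing an arbitrary $s\in S$ as $s=e(u)$ yields $e(s)=e^2(u)=e(u)=s$. Hence $\mathrm{id}_S=e\in T$.

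Finally, since $\mathrm{id}_S\in T=\overline{\{f^n\mid n\geq 1\}}$, there is a net $f^{n_\alpha}\to \mathrm{id}_S$ with every $n_\alpha\geq 1$. Suppose $f(x)=f(y)$; then $f^{n}(x)=f^{n}(y)$ for all $n\geq 1$, so $f^{n_\alpha}(x)=f^{n_\alpha}(y)$ for every $\alpha$, and passing to the pointwise limit gives $x=y$. Thus $f$ is injective and therefore an automorphism, so $S$ is Hopfian. I expect the main obstacle to be the surjectivity of the idempotent $e$ in the middle paragraph: it is the only point where one must combine the joint continuity of evaluation with the compactness of $S$ to promote pointwise surjectivity of the iterates $f^{n_\alpha}$ to surjectivity of their limit.
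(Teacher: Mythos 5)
Your proof is correct, but it takes a genuinely different route from the paper's. The paper never invokes Theorem~\ref{main1}: given a surjective $f$ with $f(x)=f(y)$ and $x\neq y$, it picks an open fully invariant congruence $\rho$ with $(x,y)\notin\rho$, notes that full invariance makes $f$ descend to a surjective endomorphism $f'$ of the \emph{finite} semigroup $S/\rho$, which is then bijective by finiteness --- contradicting $f'([x])=f'([y])$ with $[x]\neq[y]$. That argument is elementary and self-contained. Your argument instead routes through compactness of $\End S$ (the harder, Ascoli-based direction of Theorem~\ref{main1}) and then runs a compact-semigroup dynamics argument: an idempotent $e$ in the orbit closure $\overline{\{f^n\mid n\geq 1\}}$, surjectivity of $e$ via joint continuity of evaluation together with compactness of $S$, the observation that a surjective idempotent endomorphism is the identity, and approximation of the identity by powers of $f$ to extract injectivity. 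All of these steps check out; your subnet/evaluation argument for the surjectivity of $e$ is in effect a proof that the surjective maps form a closed subset of $\mathscr C(S,S)$. What your route buys is generality: it uses nothing about profiniteness beyond Theorem~\ref{main1}, and in fact shows that any compact Hausdorff semigroup whose endomorphism monoid is compact is Hopfian --- precisely the stronger statement the paper records in the Remark following this corollary (citing Akin--Auslander--Glasner), where it is proved by a slightly different device: the surjective elements form a closed submonoid whose unique idempotent is the identity, hence a compact group. The cost is heavier machinery; the paper's own proof of the corollary needs none of it, and in particular does not presuppose that $\End S$ is compact.
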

\begin{proof}
Suppose that $f\colon S\to S$ is a surjective continuous endomorphism that is not an automorphism and let $f(x)=f(y)$ with $x\neq y\in S$.  Then there is an open fully invariant congruence $\rho$ so that $(x,y)\notin \rho$.  Since $\rho$ is fully invariant, there is an induced endomorphism $f'\colon S/{\rho}\to S/{\rho}$, which evidentally is surjective.  Thus $f'$ is an automorphism by finiteness.  But if $[x],[y]$ are the classes of $x,y$ respectively, then $f'([x])=f'([y])$ but $[x]\neq [y]$.  This contradiction shows that $S$ is Hopfian.
\end{proof}

\begin{Rmk}
In fact a more general result is true.  Let $X$ be a compact Hausdorff space and let $M$ be a compact monoid of continuous maps on $X$ with respect to the compact-open topology.  Then every surjective element of $M$ is invertible cf.~\cite{Akin}.  The proof goes like this.  First one shows that the surjective elements of $M$ form a closed subsemigroup $S$ (its complement is the union over all points $x\in X$ of the open sets $\mathscr N(X,X\setminus \{x\})$ of maps $f$ with $f(X)\subseteq X\setminus \{x\}$).  Clearly, the identity is the only idempotent of $S$.  But a compact Hausdorff monoid with a unique idempotent is a compact group so every element of $S$ is invertible.  Consequently, any compact Hausdorff semigroup whose endomorphism monoid is compact must be Hopfian.
\end{Rmk}

Not all profinite semigroups have a fundamental system of open fully invariant congruences.  For instance, if $S$ is the Cantor set $\{a,b\}^{\omega}$ equipped with the left zero multiplication, then $S$ is a profinite semigroup and every continuous map on $S$ is an endomorphism.  In particular, the shift map $\sigma$ that erases the first letter of an infinite word is a surjective continuous endomorphism, which is not an automorphism.  Thus $S$ is not Hopfian and so $S$ does not have a fundamental system of open fully invariant congruences by Corollary~\ref{Hopfian}.  As another example, let $F$ be a free profinite group on a countable set of generators $X=\{x_1,x_2,\ldots\}$ converging to $1$~\cite{RZbook}.  Let $\sigma\colon F\to F$ be the continuous endomorphism induced by the shift $x_1\mapsto 1$ and $x_i\mapsto x_{i-1}$ for $i\geq 2$.  Then $\sigma$ is surjective but not injective and so $\End F$ is not profinite.

As is the case for automorphism groups of profinite groups~\cite[Proposition~4.4.3]{RZbook}, $\End S$ can be explicitly realized as a projective limit of finite monoids given a fundamental system of open fully invariant congruences on $S$. For finitely generated profinite semigroups, this was observed by Hunter~\cite{Hunterendo}.  It was pointed out to me by Luis Ribes that the realizations as a projective limit in the above sources, and in a previous version of this note, are slightly wrong.  The statement and the proof in the next theorem are based on a modification suggested by him that appears in the second edition of~\cite{RZbook}.

\begin{Thm}\label{Thm9}
Let $S$ be a profinite semigroup and suppose that $\mathscr F$ is a fundamental system of entourages for $S$ consisting of open fully invariant congruences.  If $\rho\in \mathscr F$, then there is a natural continuous projection $r_{\rho}\colon \End S\to \End {S/\rho}$.  Let $\wh{\rho}$ be the corresponding open congruence on $\End S$.  Let $\wh{\mathscr F}=\{\wh{\rho}\mid \rho\in \mathscr F\}$.  Then 
\begin{equation}\label{realizeaslimit}
\End S\cong \ilim\nolimits_{\wh{\mathscr F}}\End S/\wh{\rho}.
\end{equation}
The analogous result holds for  $\Aut S$ if there exists a fundamental system of open characteristic congruences for $S$.
\end{Thm}
\begin{proof}
First we must show that $r_{\rho}$ is continuous so that $\wh{\rho}$ is indeed an open congruence.  Indeed, if $f\in \End S$, then $r_{\rho}\inv r_{\rho}(f)$ consists of those endomorphisms $g\in \End S$ that take each block $B$ of $\rho$ into the block of $\rho$ containing $f(B)$.  But since each block of $\rho$ is compact and open, and there are only finitely many blocks, it follows that $r_{\rho}\inv r_{\rho}(f)$ is an open set in the compact-open topology on $\End S$.  Thus $\wh{\rho}$ is an open congruence.

Since the open fully invariant congruences on $S$ are closed under intersection, the set $\wh{\mathscr F}$ is closed under intersection and so it makes sense to form the projective limit in \eqref{realizeaslimit}.  Since the canonical homomorphism from $\End S$ to the inverse limit on the right hand side of \eqref{realizeaslimit} is surjective, to prove that it is an isomorphism it suffices to show that $\wh{\mathscr F}$ separates points.  If $f,g$ are distinct endomorphisms of $S$, we can find $s\in S$ so that $f(s)\neq g(s)$.  Then since $\mathscr F$ is a fundamental system of entourages, there exists $\rho\in \mathscr F$ such that $(f(s),g(s))\notin \rho$.  It follows that $r_{\rho}(f)\neq r_{\rho}(g)$.
\end{proof}

\section*{Acknowledgments}
I would like to thank Luis Ribes for pointing out an error in the original version of Theorem~\ref{Thm9}.

\end{document}